\documentclass[12pt]{article} 
\usepackage{amsmath, amssymb, amsthm, tikz}
\usepackage[left=2cm, right=2cm, top=2cm, bottom=2cm]{geometry}
\usepackage{standalone}
\usepackage{tikz}
\usetikzlibrary{decorations.pathreplacing}
\usepackage{graphicx}
\usepackage{caption}
\usepackage[
    colorlinks=true,
    linkcolor=black,
    citecolor=black,
    urlcolor=black
]{hyperref}
\usetikzlibrary{arrows.meta}
\usepackage{mathtools}
\DeclareMathOperator{\ssp}{ssp}

\theoremstyle{plain}
\newtheorem{theorem}{Theorem}
\newtheorem{lemma}[theorem]{Lemma}

\newtheorem{cor}[theorem]{Corollary}
\theoremstyle{definition}
\newtheorem{defn}[theorem]{Definition}
\theoremstyle{remark}

\def\aftermath{\par\vspace{-\belowdisplayskip}\vspace{-\parskip}\vspace{-\baselineskip}}

\begin{document}
\title{Separating Path Systems of Size at most $7.75n$}
\author{
Daniel W. Cranston\thanks{%
Department of Mathematics, William \& Mary, Williamsburg, VA, USA; \texttt{dcransto@gmail.com}%; 
}
\and
Jared Noyman\thanks{%
Department of Mathematics, William \& Mary, Williamsburg, VA,
USA; \texttt{jnoyman04@gmail.com}. This work was supported by the William \& Mary Charles Center Undergraduate Summer Research Grant.%;
}
\and
Gexin Yu\thanks{%
Department of Mathematics, William \& Mary, Williamsburg, VA,
USA; \texttt{gyu@wm.edu}%; 
}
}
\date{October 5, 2026}

\maketitle

\begin{abstract} \small
A family of paths in a graph $G$ strongly separates the edges of $G$ if for every ordered pair of distinct edges $(e,f)$ some path in the family contains $e$ and avoids $f$; the minimum size of such a family is denoted by $\ssp(G)$.
Bonamy, Botler, Dross, Naia, and Skokan proved,
for every $n$-vertex graph $G$, 
that $\ssp(G)\le 19n$;
Liu, Xu, and Yang recently improved this to $\ssp(G)\le 10n-o(n)$. We prove,
for every $n$-vertex graph $G$, 
that $\ssp(G)\le 7.75n$.
\end{abstract}

\section{Introduction}

Let $G$ be a graph. A family $\mathcal{P}$ of paths in $G$ \emph{strongly separates} the edges of $G$ if for any two distinct edges $e,f\in E(G)$ there is a path in $\mathcal{P}$ containing $e$ but not $f$, and a path in $\mathcal{P}$ containing $f$ but not $e$. We call such a family a \emph{strongly separating path system} of $G$, and write $\ssp(G)$ for the minimum size of one. If only one of the two directions is required, the family is \emph{weakly separating}. Separating systems were first studied by R\'enyi~\cite{renyi1961random}. In 2013, Katona  (see~\cite{letzter2024separating})
sought to determine the minimum number of paths needed to weakly separate the edges of an arbitrary $n$-vertex graph.

Falgas-Ravry, Kittipassorn, Kor\'andi, Letzter, and Narayanan~\cite{falgas2013separating} conjectured that every $n$-vertex graph admits a weakly separating path system of size $O(n)$, and verified this for several classes of graphs, including graphs of linear minimum degree and random graphs. Balogh, Csaba, Martin, and Pluh\'ar~\cite{balogh2016path} independently proposed the strong version of the conjecture and proved $\ssp(G)=O(n\log n)$. Letzter~\cite{letzter2024separating} improved this to $O(n\log^\star n)$ using sublinear expanders, and Bonamy, Botler, Dross, Naia, and Skokan~\cite{bonamy2023separating} settled the conjecture by proving $\ssp(G)\le 19n$. Their argument repeatedly deletes the set $S$ of rotation endpoints (defined in Section~\ref{Posa Section}) of a longest path and separates the edges incident with $S$ by paths routed along this longest path. Liu, Xu, and Yang~\cite{liu2026strongly} refined each component of this approach and proved that $\ssp(G)\le 10n-o(n)$.

Sharper bounds are known for special classes of graphs. For complete graphs, Fernandes, Mota, and Sanhueza-Matamala~\cite{fernandes2025separating} proved $\ssp(K_n)=(1+o(1))n$, and Kontogeorgiou and Stein~\cite{kontogeorgiou2024exact} obtained $\ssp(K_n)\le n+9$. Fernandes, Hoppen, Kontogeorgiou, Mota, and Peng~\cite{fernandes2026separating} proved $\ssp(G)\le n$ for every $2$-degenerate $n$-vertex graph, together with bounds for subcubic, planar, and complete bipartite graphs. For a lower bound, a disjoint union of copies of $K_4$ requires $5n/4$ paths~\cite{liu2026strongly}; so no bound of the form $(1+o(1))n$ holds for all graphs. The correct universal constant thus lies in $[5/4,10]$. From now on, we refer to strong separation as separation. We prove the following result.

\begin{theorem}\label{7.75n}
    Every graph on $n$ vertices admits a separating path system of size at most $7.75n$.
\end{theorem}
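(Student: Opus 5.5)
We will follow the Bonamy--Botler--Dross--Naia--Skokan framework as refined by Liu--Xu--Yang, but tighten the accounting at each deletion step. The plan is an induction on $n$ that peels off a set of vertices and pays a constant number of paths per deleted vertex. Concretely, we will prove the stronger, inductive statement: for every graph $G$ there is a separating path system in which the number of paths is at most $7.75|V(G)|$. We will reduce to the case that $G$ is connected, since systems on components can be combined. We may also assume the minimum degree is large, say at least $3$: a vertex of degree at most $2$ can be removed and its at most two edges handled by a bounded number of paths extending paths of the smaller system. The threshold here will be tuned so that the per-vertex cost stays below $7.75$.

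The main step takes a longest path $P=v_1\dots v_\ell$ in $G$ and, via P\'osa rotations (Section~\ref{Posa Section}), the set $S$ of rotation endpoints. Every neighbour of a vertex of $S$ lies on $P$, and the P\'osa lemma gives $|N(S)|\le 2|S|$. We delete $S$, apply induction to $G-S$ to obtain a system $\mathcal P'$, and must add at most $7.75|S|$ paths. These new paths must separate every edge incident with $S$ from every other edge of $G$. They must also keep the old edges separated from the new ones, which is automatic since old paths avoid $S$. Each new path will be built as a vertex of $S$, followed by an edge into $P$, followed by a segment of $P$ (or of a rotated path $P'$ ending at that vertex).

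To separate the edges at $s\in S$ we will use a structured pairing. List the $P$-neighbours of each $s$ in path order. Then use "prefix" and "suffix" paths: for $sv_i$, the path $s v_i v_{i+1}\dots$ up to the next neighbour, and the reverse version. Combined with the Hamiltonian-type rotated paths ending at $s$, these separate $sv_i$ from $sv_j$ and from all $P$-edges. Summed with a careful charging, this costs roughly $d(s)$ paths, where the charging distributes $\sum_{s}d(s)$ over $|S|$ using $|N(S)|\le 2|S|$. The key refinement for beating $10$ is the following. Rather than using separate families for the two directions $(e,f)$ and $(f,e)$, we will use one family of path-segments of $P$ arranged as a "binary / interval" scheme. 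Consecutive segments overlap, so each $P$-edge is covered by at least two new paths that differ on every other edge. Separately, edges inside $N(S)$ that are far from $S$ are already separated by $\mathcal P'$. We also need to argue that an edge $sv$ is separated from $P$-edges by a path covering $sv$ but stopping before the relevant $P$-edge. This holds because the segments can be cut at either side.

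The main obstacle is the counting. The naive approach costs about a constant times $d(s)$ per $s\in S$, and $d(s)$ is not bounded. The per-vertex budget of $7.75$ has to come from amortising edges of $S$ against $|N(S)|\le 2|S|$ and from sharing each path between two vertices of $S$ (entering from $s$, running along $P$, leaving to $s'$). We will first try to choose a maximal matching-like pairing of edges from $S$ into $P$. Pairs of edges $sv_i$, $s'v_j$ are joined by the $P$-segment between $v_i$ and $v_j$, so that each path separates two $S$-edges at once. We will then bound the number of unpaired edges by $O(|S|)$ using the rotation structure, and optimise the constants to reach $31/4$.
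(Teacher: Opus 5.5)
\textbf{There is a genuine gap.} Your proposal never gives a way to separate the edges at $S$ with $O(|S|)$ paths, and the mechanism it does describe cannot do so. Every path you specify contains at most two edges joining $S$ to $P$ off $P$. The prefix/suffix paths $s v_i v_{i+1}\ldots$ contain one such edge. The paired paths that enter from $s$, run along $P$, and leave to $s'$ contain two. So the paired part alone costs at least half the number of such edges.

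That number is not linear in $|S|$. In $K_n$, every vertex other than $v$ is a rotation endpoint, so $|S|=n-1$, while $(n-1)(n-2)/2$ edges at $S$ lie off $P$. Your scheme therefore needs $\Theta(|S|^2)$ paths. Bounding the unpaired edges by $O(|S|)$ cannot repair this, and ``optimise the constants to reach $31/4$'' has nothing behind it.

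The missing idea is a routing scheme in which one path picks up a whole class of edges. The paper contracts each component of $P-S$ to a single vertex. Using Lemma~\ref{disjoint N}, every neighbour of $S$ is an end of such a segment and no vertex of $S$ sees both ends of one. Hence the off-$P$ edges at $S$ map bijectively to a simple graph $H^*$ on a path $P^*$ with at most $2|S|$ vertices, and paths in $H^*\cup P^*$ lift back (Lemma~\ref{Lifting}). The paper then applies the Liu--Xu--Yang construction (Theorem~\ref{Separate H}): label the vertices along the path by $\mathbb{Z}_r$ and route one path per sum class and at most three per distance class. This separates all edges of a graph on $r$ vertices with at most $5r/2$ paths, giving $5|S|$ per iteration.

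Two further ingredients are missing, and the constant comes exactly from them.

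\emph{Separating new edges from old $P$-edges.} Only one direction of the old-versus-new separation is automatic. Your routed paths contain edges of $P$ not incident with $S$, which are old edges. So for each new edge $e$ and each old $P$-edge $f$ you still need a path containing $e$ and avoiding $f$. ``The segments can be cut at either side'' does not supply this within budget. The paper uses Fan's theorem here: it covers each component of $H_i$ by paths lying entirely inside $H_i$. This costs at most $h_i/2+s_i/4\le 7s_i/4$, using that star components need only $\lceil (r_k-1)/2\rceil$ paths.

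\emph{Edges of $P$ incident with $S$.} These are new edges and must be separated from everything, and your proposal does not treat them. Handling them per step, for example by single-edge paths, costs up to $2|S|-1$ more and would give about $8.75n$. The paper instead gathers these edges over all iterations into one graph $B$. Ordering vertices by deletion time shows $B$ is $2$-degenerate, so $n$ paths suffice in total by Theorem~\ref{2-degenerate}.

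The bound is exactly $5+7/4+1=31/4$ per vertex from these three pieces.
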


The proof of Theorem~\ref{7.75n} follows the framework used in~\cite{liu2026strongly}, which is similar to that of~\cite{bonamy2023separating}. We start with an $n$-vertex graph $G$, let $G_0 \coloneq G$, and iterate the following steps until no edges remain.
(We provide more details in Section~\ref{Proof of Main Result}.  In particular, see Figure~\ref{fig:separation_figure}.)

\begin{itemize}
    \item In the current graph $G_i$, fix a longest path $P_i$ and one of its endpoints $v$, and let $S_i$ be the set of endpoints of longest paths obtained from $P_i$ by P\'osa rotations fixing $v$ (see Section~\ref{Posa Section}).
    \item Partition the edges incident to $S_i$ into those lying on $P_i$ and those not, the latter forming a subgraph $H_i$.
    \item Separate the edges of $H_i$ by constructing a family of paths in $H_i\cup P_i$ (see Section~\ref{Contracting P}).
    \item Apply a path covering theorem of Fan to the components of $H_i$, obtaining a family of paths containing each edge of $H_i$. These paths separate the edges of $H_i$ from the edges of $P_i$, at a cost of at most $(3|S_i|-1)/2+|S_i|/4$ paths.
    \item Let $G_{i+1}\coloneq G_i\setminus S_i$.
\end{itemize} 
The edges of $P_i$ incident to $S_i$ are set aside at each iteration, and their union over all iterations induces a $2$-degenerate subgraph $B$ of $G$, whose edges can be separated by at most $n$ paths.

In~\cite{liu2026strongly}, the edges of $H_i$ are separated by constructing paths in $H_i\cup P_i$, using subpaths of $P_i$ to connect edges of $H_i$. We refer to this procedure as routing along $P_i$. This requires at most $5(3|S_i|-1)/2$ paths. Our improvement comes from contracting $P_i$ before routing which reduces the bound on the number of vertices used in the routing from $3|S_i|-1$ to $2|S_i|$, hence reducing the number of paths constructed to at most $5|S_i|$. These paths are routed along the contracted path $P_i^*$, and we show that each path constructed this way lifts to a path in $H_i\cup P_i$ whose edges in $H_i$ are precisely the preimages of its edges in $H_i^*$. Since the deleted vertex subsets $S_i$ are pairwise disjoint, $\sum_{i\ge0} |S_i|\leq n$. Summing over all iterations therefore gives at most $5n$ paths separating edges within each $H_i$, and at most $7n/4$ paths covering the edges of each $H_i$. Because each path used to cover the edges of $H_i$ lies entirely in $H_i$, these paths separate edges belonging to different $H_i$'s. Together with the separating path system contained in $B$, the edges of each $H_i$ are separated from the edges of $B$ in both directions. Combining these families proves Theorem~\ref{7.75n}, as the total number of paths separating the edges of $G$ is at most
\[
    5n+7n/4 +n=31n/4=7.75n.
\]

\section{Preliminaries}\label{Preliminatires}
In the proof of Theorem~\ref{7.75n} (see section~\ref{Proof of Main Result}) we use Theorems~\ref{2-degenerate}, \ref{Fan}, and \ref{Separate H} without proof. Here we provide a brief proof sketch of each. We also prove an easy corollary of Theorem~\ref{2-degenerate} giving an improvement in the coefficient of Theorem~\ref{7.75n} for $d$-degenerate graphs with $d\le 14$.

\begin{theorem}[Fernandes, Hoppen, Kontogeorgiou, Mota, and Peng~\cite{fernandes2026separating}] \label{2-degenerate}
    Every $2$-degenerate graph on $n$ vertices admits a separating path system of size at most $n$.
\end{theorem}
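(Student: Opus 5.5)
We prove Theorem~\ref{2-degenerate} by induction on $n$ along a degeneracy ordering, with a slightly stronger invariant. The plan is to show that every $2$-degenerate graph $G$ on $n\ge 1$ vertices has a separating path system $\mathcal{P}$ with $|\mathcal{P}|\le n$ that also satisfies a technical condition. Each edge should lie in at least one path of $\mathcal{P}$; this is needed for separation anyway once $G$ has at least two edges. In addition, for each vertex $v$ we want some control over how paths end at $v$: for instance, enough paths of $\mathcal{P}$ should have an endpoint at a prescribed vertex that they can be extended through. Small cases ($n\le 3$) are checked by hand.

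For the inductive step, pick a vertex $v$ of degree at most $2$ and apply induction to $G-v$. This gives a family $\mathcal{P}'$ with at most $n-1$ paths, so exactly one new path may be added. We split by $\deg v$.

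\textbf{Degree $0$.} Nothing needs to be done.

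\textbf{Degree $1$, with $v$ adjacent to $u$.} We extend one existing path that ends at $u$ by the edge $uv$. We also add the single path $uv$, or some path through $uv$ continuing into $G-v$. Then $uv$ is separated from every other edge in both directions: the new short path avoids all other edges, and every other edge is still covered by a path avoiding $uv$. We must check that the extended path does not lose any separation it previously provided, which holds because extending a path only adds edges.

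\textbf{Degree $2$, with $v$ adjacent to $u$ and $w$.} We need to separate $uv$ from $vw$ in both directions and from all old edges. Add the new path $u\,v\,w$, or a path $P$ ending at $u$ extended by $uv\,vw$. Then modify one old path ending at $w$ (or $u$) to take the single edge $wv$. The edges $uv$ and $vw$ then have distinct path sets: one path contains both, and a second contains only $vw$. Separation of the old edges from each other is preserved because the new edges only enlarge paths.

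The main obstacle is the path-endpoint bookkeeping. We need to guarantee that a path ending at the needed neighbor exists, and that it can be extended without becoming non-simple (it must not already contain $v$, which is automatic since $v$ is new). We also need the induction hypothesis to still hold afterwards, at every vertex, including $v$ itself, since later vertices may attach to $v$. To handle this, I would first try the invariant ``for every vertex $x$ of degree at least $1$ there is a path of $\mathcal{P}$ with $x$ as an endpoint, and these choices can be made consistently.'' Failing that, I would follow Fernandes et al.\ and strengthen the invariant to a system in which each vertex is the endpoint of a designated path, accepting an exact count of paths that is at most $n$.
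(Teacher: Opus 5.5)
Your skeleton (delete a vertex of degree at most $2$, extend old paths, add one new path) is the right one, but the degree-$2$ step as written does not give \emph{strong} separation. In every variant you describe, one of the two new edges, say $uv$, lies only on the path that contains both $uv$ and $vw$, while $vw$ also lies on the modified old path. So the set of paths through $uv$ is strictly contained in the set through $vw$, and no path contains $uv$ while avoiding $vw$. Distinct path sets give only weak separation. Each new edge needs a private path: add $N=u\,v\,w$, extend a path $Q_u$ ending at $u$ by $uv$, and extend a \emph{different} path $Q_w$ ending at $w$ by $wv$ (taking $Q_u=Q_w$ would close a cycle). Some old path must be modified here, since one new path cannot separate $uv$ from $vw$. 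Extending old paths creates a second problem, already present in your degree-$1$ step. An old edge $f$ on $Q_u$ needs a path containing $f$ and avoiding $uv$, and none exists if $Q_u$ is the only path through $f$. So your claim that ``every other edge is still covered by a path avoiding $uv$'' does not follow from an invariant that covers each edge only once.

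The missing idea is the invariant the paper attributes to Fernandes et al. For connected graphs on at least three vertices, every edge lies on exactly two paths and every vertex is an endpoint of exactly two paths. This invariant does the following:
\begin{itemize}
\item Double counting endpoints gives $|\mathcal{P}|=n$ exactly.
\item Since $u$ and $w$ each end two paths, a $u$-ended path and a distinct $w$-ended path always exist.
\item The two-paths-per-edge condition means every old edge keeps a path avoiding each new edge; for $f$ on $Q_u$, its other path avoids $uv$.
\item The degree-$1$ step (extend $Q_u$ by $uv$, add the path $uv$) and the degree-$2$ step above both restore the invariants. Here $u$ and $w$ trade the lost end of $Q_u$ or $Q_w$ for an end of the new path, and $v$ becomes an end of exactly two paths.
\end{itemize}
Your tentative invariants (some path ends at each vertex; a designated path per vertex) omit the edge-multiplicity condition, which is the essential one. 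To finish the argument you would also need the reduction to connected graphs and the base cases on three vertices. For example, use $ab$, $bc$, $a\,b\,c$ for a path, and the three two-edge paths of a triangle. Finally, you need to check that choosing $v$ of minimum degree makes every component of $G-v$ have at least three vertices once $n\ge 4$.
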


Note that we can choose the separating path system in Theorem~\ref{2-degenerate} to cover every edge. When the graph has at least two edges, this follows directly from strong separation; when the graph has exactly one edge, we take the path consisting of that edge. In~\cite{fernandes2026separating}, the authors first reduce Theorem~\ref{2-degenerate} to the connected case. For connected graphs on at least three vertices, their inductive construction maintains two invariants of the separating system: every edge belongs to exactly two paths, and every vertex is an endpoint of exactly two paths. They remove a vertex of minimum degree and apply induction to the resulting components. They then modify the resulting paths to incorporate the removed vertex and its incident edges while preserving separation and the two invariants. By decomposing a $d$-degenerate graph into $\lceil d/2\rceil$ 2-degenerate graphs, and applying Theorem~\ref{2-degenerate} to each, we prove the following corollary.

\begin{cor} \label{d-degenerate}
    Fix $d\ge 1$. The edges of every $d$-degenerate graph on $n$ vertices for $d\ge 1$ can be separated by at most $\lceil d/{2}\rceil n$ paths.
\end{cor}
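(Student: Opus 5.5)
The plan is to split the edge set of the $d$-degenerate graph $G$ into $k \coloneq \lceil d/2\rceil$ spanning subgraphs $G_1,\dots,G_k$, each $2$-degenerate. We then apply Theorem~\ref{2-degenerate} to each $G_j$ in the edge-covering form noted after its statement, and take the union of the resulting path families.

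\textbf{The decomposition.} Since $G$ is $d$-degenerate, there is an ordering $v_1,\dots,v_n$ of $V(G)$ in which each vertex $v_i$ has at most $d$ neighbours $v_{i'}$ with $i'>i$. This is obtained by repeatedly removing a vertex of degree at most $d$. For each $v_i$, we partition its set of at most $d$ forward edges into at most $k$ groups of size at most $2$, labelled $1,\dots,k$. We put an edge into $G_j$ exactly when it carries label $j$ at its earlier endpoint. Every edge has a unique earlier endpoint, so $E(G_1),\dots,E(G_k)$ partition $E(G)$.

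\textbf{Each $G_j$ is $2$-degenerate.} Take $G_j$ on the full vertex set $V(G)$, adding isolated vertices where needed. In $G_j$, every vertex has at most $2$ neighbours later in the same ordering. Hence in any subgraph of $G_j$, the earliest vertex in the ordering has degree at most $2$, so $G_j$ is $2$-degenerate on $n$ vertices.

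\textbf{Applying Theorem~\ref{2-degenerate}.} For each $j$, Theorem~\ref{2-degenerate} gives a separating path system $\mathcal P_j$ of $G_j$ with $|\mathcal P_j|\le n$. By the remark following that theorem, we may choose $\mathcal P_j$ so that it covers every edge of $G_j$. If $G_j$ has no edges, we take $\mathcal P_j=\emptyset$. Let $\mathcal P \coloneq \bigcup_j \mathcal P_j$; then $|\mathcal P|\le kn$. Each path in $\mathcal P_j$ is a path in $G_j\subseteq G$, hence a path in $G$.

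\textbf{Verifying strong separation.} Let $(e,f)$ be an ordered pair of distinct edges of $G$.
\begin{itemize}
\item If both $e$ and $f$ lie in the same $G_j$, then some path of $\mathcal P_j$ contains $e$ and avoids $f$, because $\mathcal P_j$ separates $G_j$.
\item If $e\in G_j$ and $f\in G_{j'}$ with $j\ne j'$, then some path of $\mathcal P_j$ contains $e$ by the covering property. That path uses only edges of $G_j$, and $f\notin E(G_j)$ since the $E(G_j)$ are disjoint, so it avoids $f$.
\end{itemize}
Thus $\mathcal P$ strongly separates $E(G)$ and has at most $\lceil d/2\rceil n$ paths.

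\textbf{Main point of care.} The step that needs attention is separating edges that lie in different parts. This is exactly why the covering version of Theorem~\ref{2-degenerate} is needed, including the degenerate case where some $G_j$ has a single edge, which the remark handles. The decomposition itself is routine once the degeneracy ordering is fixed.
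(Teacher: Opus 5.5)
Your proof is correct and is essentially the paper's argument. The paper builds the same decomposition into $\lceil d/2\rceil$ spanning $2$-degenerate subgraphs by induction on $n$ (deleting a minimum-degree vertex and splitting its at most $d$ edges into groups of size at most $2$), which is your degeneracy-ordering construction written inductively; it then takes the union of the systems from Theorem~\ref{2-degenerate}, and your explicit use of the edge-covering version for edges in different parts is exactly what the paper's cross-part step implicitly relies on.
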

\begin{proof}
    We claim that the edges of any $d$-degenerate graph can be partitioned into $\lceil{d}/{2}\rceil$ spanning edge-disjoint $2$-degenerate subgraphs. Let $r\coloneq\lceil{d}/{2}\rceil$. We proceed by induction on $n$. For $n=1$ take $r$ edgeless spanning subgraphs. Now suppose that $n\ge 2$ and the claim is true for all $d$-degenerate graphs on fewer than $n$ vertices. Let $G$ be a $d$-degenerate graph on $n$ vertices. Delete a vertex $v\in V(G)$ of minimum degree. Since $G$ is $d$-degenerate, $d_G(v)\le d$, and the new subgraph $G-v$ is also $d$-degenerate. By induction $G-v$ can be partitioned into $r$ spanning edge-disjoint $2$-degenerate subgraphs $H_1,\ldots,H_r$. We partition the edges incident to $v$ into sets $F_1,\ldots,F_r$, each of size at most $2$, allowing empty sets. This is possible because $\deg_G(v)\leq d\leq 2r$. For each $i\in [r]$ where $[r]\coloneq \{1,\ldots, r\}$, let $G_i$ be the graph formed from $H_i$ by adding $v$ and the edges in $F_i$. 

   We claim that each $G_i$ is $2$-degenerate. Indeed, every non-empty subgraph of $G_i$ containing $v$ has a vertex of degree at most $2$, namely $v$, while every non-empty subgraph avoiding $v$ is a subgraph of some $H_i$, and thus has a vertex of degree at most $2$. Furthermore, $E(G_1),\ldots, E(G_r)$ partition $E(G)$. This completes the induction

   Since each $G_i$ is a $2$-degenerate $n$-vertex graph, by Theorem~\ref{2-degenerate}, each $G_i$ admits a separating path system $\mathcal{P}_i$ of size at most $n$. We claim that $\mathcal P\coloneq\bigcup_{i=1}^r\mathcal P_i$ is a separating path system of $G$. Indeed, edges in the same $G_i$ are separated by $\mathcal P_i$. For all distinct $i,j\in [r]$, and edges $e\in E(G_i)$ and $f\in E(G_j)$ there exists a path of $\mathcal{P}_i$ containing $e$ and avoiding $f$ and symmetrically a path of $\mathcal{P}_j$ containing $f$ and avoiding $e$. Thus, $\mathcal P$ separates the edges of $G$, and
\[
|\mathcal P|
\leq \sum_{i=1}^r|\mathcal P_i| \leq rn =\lceil {d}/2\rceil n.
\]
\aftermath
\end{proof}

Another ingredient in our proof of Theorem~\ref{7.75n} is the following path covering theorem of Fan. A \emph{path cover} of a graph $G$ is a family of paths in $G$ whose union contains every edge of $G$. Note that every separating path system of a graph with at least two edges is also a path cover.

\begin{theorem} [Fan~\cite{fan2002subgraph}] \label{Fan}
    Every connected $n$-vertex graph admits a path cover of size at most $\lceil{n}/{2}\rceil$.
\end{theorem}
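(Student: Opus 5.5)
We prove Theorem~\ref{Fan} by induction on $n$. The plan is to find a long path $P$ whose removal leaves a graph in which every component is small. Then we apply induction to each component and attach the resulting paths back to $P$ while sharing endpoints carefully, so that the total count stays at $\lceil n/2\rceil$. The cases $n\le 2$ are trivial: a single vertex needs no path, and a single edge needs one. A convenient strengthening to carry through the induction is the following: every connected graph on $n$ vertices has a path cover with at most $\lceil n/2\rceil$ paths, and, when $n$ is even, one can additionally prescribe a vertex that is an endpoint of one of the paths. Lovász-style path decomposition arguments often need exactly this kind of extra flexibility for parity bookkeeping.

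First, we handle the easy reductions. If $G$ has a vertex $v$ such that $G-v$ is connected and $v$ has a neighbour $u$, we would like to cover $G-v$ by induction with $u$ as a prescribed endpoint, and then extend that path through $v$. However, the edges at $v$ beyond one neighbour also need covering. Pairs of edges $xv,vy$ form paths of length $2$, so $\deg(v)$ edges cost $\lceil \deg(v)/2\rceil$ extra paths, and this is too expensive in general. So instead of removing one vertex at a time, we remove a pair $\{v,w\}$: we try to show that removing two vertices and charging one new path to them suffices. Concretely, take a longest path or a DFS tree. Let $v,w$ be two vertices at the deepest level (leaves sharing a parent, or a leaf and its parent). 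Cover all edges incident to $\{v,w\}$ by one path, or by extending existing paths of the cover of $G-\{v,w\}$ through $v$ and $w$.

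The key step, and the main obstacle, is this extension. We have edges from $v$ to several vertices $x_1,\dots,x_k$ of $G'=G-\{v,w\}$. The paths of an inductive cover $\mathcal{Q}$ of $G'$ (of size $\lceil (n-2)/2\rceil$) have endpoints. An endpoint $q$ of a path $Q\in\mathcal{Q}$ that is adjacent to $v$ lets us prolong $Q$ by the edge $qv$ for free, and two paths ending at $x_i$ and $x_j$ can be merged into one through $v$ as $Q_i\,x_i v x_j\,Q_j^{-1}$, which frees up a path. So the counting goes as follows: every edge at $v$ or $w$ whose far end is a path endpoint is absorbed at no cost. The remaining edges are absorbed by rerouting. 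Replace a path $Q$ that passes through $x$ as $\dots a\,x\,b\dots$ by the two pieces $\dots a\,x\,v$ and $v\,x'\dots$, splicing through $v$ with $2$ edges. To make this work, I would choose $\mathcal{Q}$ minimal and apply the strengthened hypothesis, which lets us make neighbours of $v$ into path endpoints. Choosing $v$ as the endpoint of a longest path, so that $N(v)$ lies on that path, gives the needed structure in the style of Pósa rotations: each neighbour $x_i$ of $v$ on the longest path $P=v\dots$ yields a path with a new endpoint (the predecessor of $x_i$).

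If the direct induction becomes too messy, I would fall back on the known route: use Lovász's theorem that every graph on $n$ vertices decomposes into at most $\lfloor n/2\rfloor$ paths and cycles. Each cycle must then be broken. A cycle $C$ sharing a vertex $u$ with a path $Q$ can be merged by traversing $C$ from $u$ and deleting one edge, and that deleted edge $e$ is still covered, because covering (unlike decomposing) allows overlaps: we add $e$ to an adjacent path by extending along $C$ slightly. Connectivity guarantees that every cycle touches some other element. The delicate count is then to show that at most one extra path is needed overall, giving $\lfloor n/2\rfloor+1$. This must be sharpened to $\lceil n/2\rceil$ when $n$ is even, by showing that at least one cycle can be absorbed for free when every element is a cycle. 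Handling this parity case is where I expect the real difficulty.
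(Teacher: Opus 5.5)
Neither of your two routes is a proof yet. In each, the step that carries all the difficulty is only asserted, and the mechanism you offer for it fails.

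\textbf{The inductive route.} Deleting $\{v,w\}$ leaves you exactly one spare path, since $\lceil n/2\rceil-\lceil (n-2)/2\rceil=1$. Yet as many as $2n-3$ edges meet $\{v,w\}$, and a single new path meets at most four of them. So nearly all of these edges must be absorbed by prolonging paths of $\mathcal{Q}$ at their ends, and nothing in your argument controls where those ends are. Your strengthening prescribes only one endpoint, and P\'osa rotations produce endpoints of longest paths of $G$, not of paths of $\mathcal{Q}$.

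This already breaks for $G=K_n$ with $n\ge 6$ even, for every choice of $v,w$. By counting edges, any cover of $K_{n-2}$ by $(n-2)/2$ paths consists of edge-disjoint Hamiltonian paths. Each path of $\mathcal{Q}$ can therefore be prolonged only into $\{v,w\}$ and gains at most two edges there. Altogether at most $(n-2)+4=n+2<2n-3$ of the edges at $\{v,w\}$ get covered.

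Your rerouting move is not free either. Replacing $\dots a\,x\,b\dots$ by $\dots a\,x\,v$ and $v\,x'\dots$ either turns one path into two or, if glued into $\dots a\,x\,v\,b\dots$, leaves $xb$ uncovered. Separately, a leaf together with its DFS parent can disconnect $G$. Then the induction hypothesis does not apply, and the rounding over odd components exceeds your budget.

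\textbf{The Lov\'asz fallback.} Splicing a cycle $C$ into a path $Q$ at a common vertex $u$ yields a path only if $u$ is an end of $Q$ and $C$ meets $Q$ in $u$ alone. Re-covering the deleted edge ``by extending an adjacent path'' likewise needs an endpoint you do not have. For $K_n$ with $n$ odd, a Lov\'asz decomposition can consist of $(n-1)/2$ Hamiltonian cycles, any two sharing all vertices, so your splicing does nothing there. Reaching $(n+1)/2$ paths requires a global rearrangement that local splicing cannot provide.

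Your claim that ``at most one extra path is needed overall'' already implies the theorem for odd $n$, because $\lfloor n/2\rfloor+1=\lceil n/2\rceil$. So it is the entire content of the statement, not bookkeeping. Before Fan, the best general bound was $n/2+O(n^{3/4})$, due to Pyber.

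\textbf{The missing idea} is the one the paper sketches. Fan fixes $n$ and takes a counterexample with maximum degree as large as possible. He applies an edge switching that keeps the graph connected on the same vertex set while raising the maximum degree. He then shows that a path cover of the switched graph can be transferred back without increasing its size. Because $n$ stays fixed, there is never a need to pay for all edges at deleted vertices with a single extra path.
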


Fan's proof of Theorem~\ref{Fan} assumes the theorem is false and takes, for fixed $n$, $G$ to be a counterexample with maximum degree as large as possible. An edge-switching operation on $G$ yields a connected graph $G'$ on the same vertex set with strictly larger maximum degree. By the choice of $G$, this new graph $G'$ admits a path cover of size at most $\lceil n/2\rceil$. Fan then shows that this path cover of $G'$ can be transferred back to $G$ without increasing the number of paths, giving a contradiction. The following theorem is given as Lemma 2.4 in~\cite{liu2026strongly}.

\begin{theorem}[Liu, Xu, and Yang~\cite{liu2026strongly}]\label{Separate H}
     Fix a graph $G$.
     Let $F$ be a subgraph on $r$ 
     non-isolated vertices, with $r\ge 2$, and let $Q$ be a path such that $V(F)\subseteq V(Q)$ and $E(F)\cap E(Q)= \emptyset$. Then there exists a family of paths in $F \cup Q$ that strongly separate the edges of $F$ with size at most ${5r}/{2}$.

\end{theorem}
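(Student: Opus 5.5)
Since the paper treats Theorem~\ref{Separate H} as a black box, I would reconstruct a proof along the lines of Liu, Xu, and Yang. The plan has two parts: first separate the edges of $F$ \emph{within} each vertex's star, then use a small number of extra paths to separate edges with no common endpoint. Order the $r$ non-isolated vertices of $F$ as $v_1,\dots,v_r$ along $Q$. The basic tool is routing along $Q$. Given a matching $M$ in $F$, or more generally a linear forest, list its edges in the order of their leftmost endpoints on $Q$. If the intervals they span on $Q$ are pairwise disjoint, we can concatenate them into one path using subpaths of $Q$ as connectors. Since $E(F)\cap E(Q)=\emptyset$, the $F$-edges of the resulting path are exactly those of $M$. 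So the first step is to decompose $E(F)$ into a bounded number of families, each realizable as few such paths.

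\textbf{Step 1: first-coordinate separation.} For every vertex $v_j$, choose an ordering of the edges at $v_j$ and assign them labels. Build paths so that, for two edges $e,f$ sharing the vertex $v_j$, some path uses $e$ but not $f$. Concretely, I would orient $F$ so that each vertex receives edges of ``types'' with respect to its neighbors to the left and to the right along $Q$. Then, for each $j$, I would create paths that start at $v_j$, take one edge of $F$ from $v_j$, and continue along $Q$ monotonically, picking up further edges. By charging each path to a vertex (for instance to its starting vertex $v_j$) and allowing about two paths per vertex, separating edges at a common endpoint costs roughly $2r$ paths.

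\textbf{Step 2: remaining pairs.} Edges $e=xy$ and $f=zw$ with disjoint endpoints are separated by any path through $e$ that avoids $f$. The Step~1 paths already contain every edge. I would add about $r/2$ further paths, obtained by pairing up vertices consecutive along $Q$ and routing, so that for every $e$ the paths containing $e$ have common intersection exactly $\{e\}$. This gives the total $2r+r/2=5r/2$.

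The main obstacle is Step~1. Routing along $Q$ cannot take two $F$-edges whose $Q$-intervals cross or nest, since the path would revisit vertices. So the families must be laminar-compatible: each path may take at most one edge forward from each vertex and must move monotonically. My first attempt would be to take each vertex's edges to its right in increasing order of the other endpoint, and to let path $j$ take the $k$th rightward edge at $v_j$, then jump to that endpoint and continue. Verifying that this rule separates every pair with a common endpoint within the $5r/2$ budget is the delicate counting step.
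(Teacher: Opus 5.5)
There is a genuine gap, and it lies in the routing model you adopt. Your claim that a route along $Q$ cannot use two $F$-edges whose $Q$-intervals nest or cross is false.

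\begin{itemize}
\item \emph{Nesting.} If $a<c<d<b$ along $Q$ with $ab,cd\in E(F)$, then go $a\to b$ by $F$, back along $Q$ from $b$ to $d$, then $d\to c$ by $F$. This is a path.
\item \emph{Crossing.} For $a<c<b<d$, go $a\to b$ by $F$, back along $Q$ to $c$, then $c\to d$ by $F$.
\item \emph{Whole nested families.} A family $a_1<a_2<\dots<b_2<b_1$ is traversed as one inward ``spiral''.
\end{itemize}

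The monotone routes you restrict Step~1 to cannot work. A path that moves monotonically along $Q$ crosses the gap between $v_{\lfloor r/2\rfloor}$ and $v_{\lfloor r/2\rfloor+1}$ in a single step. So it contains at most one $F$-edge joining $\{v_1,\dots,v_{\lfloor r/2\rfloor}\}$ to the rest. Take $F=K_r$, with $Q$ alternating between vertices of $F$ and new vertices so that $E(F)\cap E(Q)=\emptyset$. There are $\lfloor r/2\rfloor\lceil r/2\rceil$ such edges. Merely covering $E(F)$ with your Step~1 paths, which you say contain every edge, therefore needs about $r^2/4>5r/2$ paths once $r\ge 11$.

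Separately, Steps~1 and~2 are not constructions. ``About two paths per vertex'' and ``about $r/2$ further paths'' come with no rule. The Step~2 requirement that the paths through $e$ meet exactly in $\{e\}$ is just a restatement of strong separation.

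The missing idea is a partition of $E(F)$ that matches what non-monotone routing can do. Label the vertices of $F$ by $0,\dots,r-1$ in their order on $Q$.

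\begin{itemize}
\item \emph{Sum classes.} The class $\{ij: i+j\equiv c \pmod r\}$ is a matching. It is the union of two nested families: $i+j=c$, lying inside $[0,c]$, and $i+j=c+r$, lying inside $[c+1,r-1]$. Hence it can be routed as a single spiral-type path. These $r$ paths alone separate every pair of edges sharing an endpoint, because such edges have different sums. This is what your Step~1 was aiming for, at cost $r$ rather than $2r$.
\item \emph{Distance classes.} The cyclic-distance classes, each covered by at most three routed paths, handle pairs within the same sum class.
\item \emph{Count.} For odd $r$, the sum and the distance together determine the edge, since $2$ is invertible modulo $r$. This gives $r+3(r-1)/2=(5r-3)/2$ paths. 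Even $r$ is handled separately within $5r/2$.
\end{itemize}
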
 

 In \cite{liu2026strongly}, when $r$ is odd, the authors label the vertices of $F$ by $0,\dotsc,r-1$ according to their order along $Q$, viewing these labels as elements of $\mathbb{Z}_r$. They then partition $E(F)$ into sum classes, grouping edges according to $i+j\pmod r$, where $i$ and $j$ are the labels of their endpoints. A second partition of $E(F)$ groups the edges according to the cyclic distance $\min\{|i-j|,r-|i-j|\}$ between their endpoint labels. The edges of each sum class are covered by a single path, while the edges of each distance class are covered by at most three paths. Each path constructed this way is routed along $Q$ and contains no edges of $F$ outside its assigned class. For odd $r$, the sum and distance classes jointly identify each edge uniquely, so the resulting paths strongly separate the edges of $F$. This gives at most $r+3\big (({r-1})/{2}\big)={(5r-3)}/{2}$ paths. The authors treat even $r$ separately, using a similar strategy and obtaining at most ${5r}/2$ paths. 

\section{P{\'{o}}sa Rotation Extension} \label{Posa Section}

Let $G$ be a graph and let $P=u,\dotsc,v$ be a longest path of $G$. For a vertex set $S\subseteq V(G)$, we define its
\textit{neighborhood} in $G$ by
\[
N_G(S)\coloneq\{y\in V(G)\setminus S:
xy\in E(G)\text{ for some }x\in S\}.
\] Let $x\in N_G(u)$ be a neighbor of $u$, and note that $x$ lies on $P$ by the maximality of $P$. Let $w$ be the predecessor of $x$ in the order of $P$ from $u$ to $v$. Let $P'\coloneq P\cup{\{ux\}}\setminus{\{wx\}}$. We say that $P'=w,\dotsc,u,x,\dotsc,v$ is \emph{derived} from $P$ by an \emph{elementary edge exchange fixing $v$} and we call $w$ the \emph{free endpoint} of $P'$.  
Also note that $V(P')=V(P)$. We give the following standard lemma and its proof to illustrate techniques we use later in Sections~\ref{Contracting P} and~\ref{Proof of Main Result}. Readers familiar with P\'osa
rotations and Lemma~\ref{Posa} may skip to Section~\ref{Contracting P}.

\begin{figure}[htbp]
    \centering
    \resizebox{0.85\linewidth}{!}{\begin{tikzpicture}[yscale=1.1,
  line width=1pt,
  line cap=round,
  line join=round,
  vertex/.style={circle,draw=black,fill=white,inner sep=0pt,minimum size=5.5pt},
  every label/.style={font=\large}
]
  \coordinate (u) at (0,0);
  \coordinate (w) at (5,0);
  \coordinate (x) at (6.0,0);
  \coordinate (v) at (10.1,0);

  \draw[yellow!65,line width=7pt]
    (w)--(u) .. controls (1.5,1.65) and (4.5,1.65) .. (x)--(v);

  \draw[black] (0,0)--(10.1,0);
  \draw[black] (u) .. controls (1.5,1.65) and (4.5,1.65) .. (x);

  \foreach \xx in {2.20,2.50,2.80,7.75,8.05,8.35}
    \fill (\xx,0) circle (1.5pt);

  \node[vertex,label=below:{\huge{$u$}}] at (u) {};
  \node[vertex,label=below:{\huge{$w$}}] at (w) {};
  \node[vertex,label=below:{\huge{$x$}}] at (x) {};
  \node[vertex,label=below:{\huge{$v$}}] at (v) {};

  \begin{scope}[xshift = 6in]
  \coordinate (x) at (0,0);
  \coordinate (z) at (5,0);
  \coordinate (y) at (6.0,0);
  \coordinate (v) at (10.1,0);

  \draw[yellow!65,line width=7pt]
    (z)--(x) .. controls (1.5,1.65) and (4.5,1.65) .. (y)--(v);

  \draw[black] (0,0)--(10.1,0);
  \draw[black] (x) .. controls (1.5,1.65) and (4.5,1.65) .. (y);

  \foreach \xx in {2.20,2.50,2.80,7.75,8.05,8.35}
    \fill (\xx,0) circle (1.5pt);

  \node[vertex,label=below:{\huge{$x$}}] at (x) {};
  \node[vertex,label=below:{\huge{$z$}}] at (z) {};
  \node[vertex,label=below:{\huge{$y$}}] at (y) {};
  \node[vertex,label=below:{\huge{$v$}}] at (v) {};
  \end{scope}
\end{tikzpicture}}
    \captionsetup{width=.8\linewidth}
    
    \caption{Left: A longest path derived from another longest path $P$ via an elementary exchange fixing $v$. Right: The path $Q'$ derived from $Q$ in the proof of Lemma~\ref{Posa}.}
    \label{Rotation_figure}
\end{figure}

\begin{lemma} [\cite{posa1976hamiltonian}]\label{Posa}
   Let $P=u,\dotsc, v$ be a longest path of a graph $G$ and let $S_v(P)$ denote the set of endpoints of all possible longest paths derived from $P$ by elementary exchanges fixing $v$. For short, we let $S\coloneq S_v(P)$ and let $s\coloneq|S|$. Then $N_P(S)=N_G(S)$ and $|N_G(S)| \le 2s-1$.
\end{lemma}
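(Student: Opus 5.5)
This is the classical P\'osa argument. First, every vertex of $S$ is an endpoint of some longest path $Q$ with vertex set $V(P)$, so all neighbours of $s\in S$ lie on $Q$, hence on $P$. This gives $N_G(S)\subseteq V(P)$, and therefore $N_G(S)=N_P(S)$, where the right-hand side is read as the set of neighbours lying in $V(P)\setminus S$. (If $N_P$ means neighbours along the path $P$ itself, the second claim below shows these sets coincide.)

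The main claim is: \emph{if $x\in S$ and $y\in N_G(x)\setminus S$, then $y$ is adjacent on $P$ to some vertex of $S$}, i.e.\ $y$ is the predecessor or successor on $P$ of a vertex of $S$. Since $v$ is fixed and $P$ has $s$ points of $S$, each with at most two $P$-neighbours, this gives $|N_G(S)|\le 2s$. For the $-1$, note that $u\in S$ is the first vertex of $P$ and has no predecessor, so $|N_G(S)|\le 2s-1$. If $v\in S$ (possible only in degenerate cases) the count improves further, since $v$ has no successor.

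To prove the claim, take a path $Q$ ending at $x$ (free end) and $v$, obtained from $P$ by a sequence of exchanges. Let $y^-,y^+$ be the neighbours of $y$ on $P$. Rotating $Q$ at the edge $xy$ makes the $Q$-predecessor $z$ of $y$ (on the $x$ side) an endpoint, so $z\in S$. If $z\in\{y^-,y^+\}$ we are done. Otherwise the edge $\{y,z\}$ is not an edge of $P$; it was destroyed somewhere and re-created by an earlier exchange. Here I would use the standard induction on the number of exchanges, showing the following invariant. If both $P$-neighbours $y^\pm$ of $y$ are outside $S$, then the $P$-edges $yy^-$ and $yy^+$ are never broken during rotations. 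This holds because an exchange that breaks the edge $wx'$ makes $w$ (one end of that edge) a new endpoint, hence in $S$. Breaking $yy^\pm$ would therefore make $y$ or $y^\pm$ a member of $S$; $y\notin S$ by assumption and $y^\pm\notin S$ by hypothesis. Hence in every derived path $Q$ the two $Q$-neighbours of $y$ are exactly $y^-,y^+$, so the rotation at $xy$ produces an endpoint in $\{y^-,y^+\}\cap S$, a contradiction. The only case excluded is $y=v$ or $y$ an endpoint, which needs a line of care: $y$ with a single $P$-neighbour, handled the same way.

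The main obstacle is making the invariant precise. It must be checked that an exchange breaks exactly the edge between the new free endpoint and its successor, and that this endpoint is genuinely in $S$ (it is, by definition of $S$ as all reachable endpoints). One must also check that nothing else changes adjacency in the path. Once this is verified, the counting step is routine.
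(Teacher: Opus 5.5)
Your proof is correct, but it is organized differently from the paper's.

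\textbf{Your route.} You argue by contradiction. Suppose neither $P$-neighbour of $y$ lies in $S$. Then the $P$-edges at $y$ have no end in $S$. Every exchange deletes an edge at the new free endpoint, which lies in $S$, so these edges survive all exchanges. Hence $y$ has the same neighbours in every derived path $Q$ as in $P$, and rotating $Q$ at $xy$ puts one of them into $S$, a contradiction. This is precisely the argument of the paper's Lemma~\ref{Segment}, applied to the subpath $y^-,y,y^+$.

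\textbf{The paper's route.} The paper argues directly. It first guarantees that some edge at $y$ is deleted: it appends the rotation at $xy$ if $xy\notin E(Q)$, or uses the exchange that created $xy$ if $xy\in E(Q)$. It then takes the first edge at $y$ ever deleted, shows this edge lies in $P$, and concludes that its other end is a vertex of $S$ adjacent to $y$ on $P$.

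\textbf{Comparison.} Your invariant handles both cases at once. It also avoids the paper's slightly delicate step that the first deleted edge at $y$ belongs to $P$. That step tacitly uses that, since $y\notin S$, an edge at $y$ can only be added in a step that also deletes an edge at $y$. The paper's route, in return, is constructive and exhibits the witnessing $P$-edge.

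\textbf{Repairs needed.} First, when $xy\in E(Q)$ there is no rotation at $xy$. Your invariant still gives $x\in\{y^-,y^+\}$, contradicting $x\in S$, and you should say this explicitly. Second, in the paper $N_P(S)$ is the neighbourhood of $S$ in the graph $P$, so the reading in your first paragraph is not the intended one. It is your main claim that yields $N_G(S)\subseteq N_P(S)$; the reverse inclusion is trivial. Finally, the sentence about $yz$ being ``destroyed and re-created'' is superseded by the contradiction argument and can be dropped.
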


\begin{proof} Because $P$ is a path, its maximum degree is $2$. By definition, each $y \in N_P(S)$ is adjacent to some $x \in S$ via an edge of $P$. Since each $x \in S$ contributes at most $2$ neighbors on $P$ and $u$ contributes at most $1$, we have $|N_P(S)|\le 2s-1$. Thus, it suffices to prove that $N_G(S) \subseteq N_P(S)$.

Let $y$ be an arbitrary vertex in $N_G(S)$. By definition, there exists a vertex $x \in S$ such that $xy \in E(G)$. If $xy\in E(P)$, then there is nothing to prove. Otherwise, since $x \in S$, there exists a longest path $Q$ ending at $x$ that was derived from $P$ via a finite sequence of elementary exchanges, fixing $v$ and $Q$ is ordered from $x$ to $v$. By the maximality of $Q$, we have $y \in V(Q)$; otherwise, the path $Q \cup \{xy\}$ would be longer than $Q$, a contradiction. 

If $xy\notin E(Q)$, let $z$ be the predecessor of $y$ in the order of $Q$ and let $Q' \coloneq Q \cup \{xy\} \setminus \{yz\}$ be a longest path ending at $z$ derived from $Q$ and fixing $v$ (see Figure~\ref{Rotation_figure}). By definition, $z \in S$. Consider the sequence of derivations from $P$ to $Q'$:

$$P = P_0 \xrightarrow{\text{exchange 1}} P_1 \xrightarrow{\text{exchange 2}} \cdots \xrightarrow{\text{exchange } k} P_k = Q \xrightarrow{\text{exchange } k+1} P_{k+1} = Q'.$$

If $xy \in E(Q)$, consider instead the sequence
$P=P_0,\dots,P_k=Q$. Since $xy \notin E(P)$, some exchange in this sequence added the edge $xy$. Moreover, since $y \notin S$, the path immediately before this exchange in the sequence had $x$ as its free endpoint. Thus, this exchange removed the edge joining $y$ to its predecessor on that path.

Notice the following:

\begin{itemize}
    \item [(1)] Let $e$ be the first edge incident to $y$ that is removed during the sequence of derivations. This edge must exist by the above argument if $xy\in E(Q)$, or the final step ($Q \to Q'$) explicitly removes $yz$ if $xy\notin E(Q)$. Because $e$ is the first edge incident to $y$ to be deleted, it must be contained in the initial path.
    \item [(2)] If an edge is removed at any step of this sequence, then one of its 
    endpoints must lie in $S$. \end{itemize}

Therefore, by (1), $e \in E(P)$ and by (2), one endpoint of $e$ must lie in $S$. By construction, $y\notin S$, so its neighbor via $e$ must be in $S$. It follows that $y\in N_P(S)$ and since $N_P(S) \subseteq N_G(S)$ we have $N_P(S) = N_G(S)$.
\end{proof}

\section{Contracting \texorpdfstring{$P$}{P}} \label{Contracting P}

In this section, we identify the subpaths of $P$ that we will contract, and we establish the contraction and lifting lemmas that we will use in Section~\ref{Proof of Main Result}.

\begin{lemma} \label{Segment}
Let $P=u,\dots,v$ be a longest path and define $S$ as in Lemma~\ref{Posa}. If $W$ is a subpath of $P$ with $V(W)\cap S=\emptyset$, then for any longest path $Q$ derived from $P$, by a sequence of elementary exchanges fixing $v$, we have $E(W)\subseteq E(Q)$. Furthermore, $W$ occurs as a subpath of $Q$.
\end{lemma}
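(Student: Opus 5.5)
We argue by induction on the number of elementary exchanges in a sequence $P=P_0,P_1,\dots,P_k=Q$ fixing $v$. The inductive claim is that $E(W)\subseteq E(P_j)$ for every $j$. For $j=0$ this holds trivially.

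For the inductive step, suppose $E(W)\subseteq E(P_{j})$. The path $P_{j+1}$ is obtained from $P_j$ by adding one edge $xy$, where $x$ is the free endpoint of $P_j$, and deleting one edge $wy$, where $w$ is the predecessor of $y$ on $P_j$. The new free endpoint $w$ lies in $S$ by definition of $S$. So every removed edge has an endpoint in $S$; this is observation (2) in the proof of Lemma~\ref{Posa}, and here it follows immediately from the definition. Every edge of $W$ has both endpoints in $V(W)$, which is disjoint from $S$. Hence the deleted edge $wy$ is not an edge of $W$, and $E(W)\subseteq E(P_j)\setminus\{wy\}\subseteq E(P_{j+1})$. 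This completes the induction and gives $E(W)\subseteq E(Q)$.

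For the second assertion, $W$ is a connected path whose edges all lie in the path $Q$. A connected subgraph of a path consisting of edges of that path is a subpath, so $W$ occurs as a subpath of $Q$ on the same vertex sequence, possibly traversed in the reverse direction relative to $Q$'s orientation.

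There is a degenerate case: if $W$ is a single vertex, the edge claim is vacuous. Then $W$ is a subpath of $Q$ simply because $V(Q)=V(P)$.

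The only point requiring care is confirming that the deleted edge always has an endpoint in $S$. In an exchange, the deleted edge is $wy$ and the new endpoint is $w$. Since $P_{j+1}$ is a longest path derived from $P$ by exchanges fixing $v$, its free endpoint $w$ belongs to $S=S_v(P)$. This is exactly what the definition of $S_v(P)$ provides, so I expect no real obstacle beyond stating this carefully.
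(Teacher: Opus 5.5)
Your proof is correct and follows essentially the same argument as the paper. Every edge deleted by an exchange has the new free endpoint, which lies in $S$, as an endpoint, so induction along the exchange sequence shows no edge of $W$ is ever removed, and then a connected set of edges of the path $Q$ forms a subpath.
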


Intuitively, Lemma~\ref{Segment} holds because removing any edge $e$ of $W$ in an elementary exchange fixing $v$ puts at least one endpoint of $e$ into $S$, contradicting our hypothesis that $V(W)\cap S = \emptyset$. Below we formalize this intuition.

\begin{proof}
Since $Q$ is derived from $P$ by elementary exchanges fixing $v$, there is a sequence $Q_0,\dots,Q_r$, with $Q_0=P$ and $Q_r=Q$, in which each path is derived from the previous one by a single elementary exchange fixing $v$. Let $Q_t=q_0,\dots,q_k$, with $q_k=v$, be a path realized at some point in this sequence. An exchange applied to $Q_t$ using a chord $q_0q_i$ with $i\ge2$ deletes exactly the edge $q_{i-1}q_i$ and makes $q_{i-1}$ the new free endpoint; hence $q_{i-1}\in S$. Therefore, every edge deleted in an exchange has an endpoint in $S$.

Since no edge of $W$ has an endpoint in $S$, no edge of $W$ is ever deleted, and induction along the sequence gives $E(W)\subseteq E(Q_t)$ for every $t\in \{0,\ldots, r\}$. In particular $E(W)\subseteq E(Q)$. Finally, $W$ is connected, so $E(W)$ spans a connected subgraph of the path $Q$, which is therefore a subpath of $Q$ with the same ends as $W$. Thus, $W$ occurs in $Q$ as a subpath.
\end{proof}

\begin{defn}\label{segment}
Let $P=u,\dots,v$ be a longest path and define $S$ as in Lemma~\ref{Posa}. A \emph{segment} of $P$ is a component of $P\setminus S$, that is, a maximal subpath of $P$ containing no vertex of $S$. We write $P[x,y]$ for a segment with ends $x$ and $y$, labeled so that $x$ precedes $y$ in the order of $P$ and often let $W\coloneq P[x,y]$. Note that a segment consisting of a single vertex has $x=y$. We list the segments as $W_1,\dots,W_m$ in their order along $P$. 
\end{defn}

\begin{lemma} \label{disjoint N}
    Fix a segment $P[x,y]$ of $P$. If we let $W\coloneq P[x,y]$, then
        \begin{itemize}
            \item[(i)] $\big(V(W)\setminus\{x,y\}\big)\cap N_G(S)=\emptyset$; and
            \item[(ii)] if $x\neq y$, then $N_G(x)\cap N_G(y)\cap S=\emptyset$.
        \end{itemize}
\end{lemma}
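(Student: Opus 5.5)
Both parts reduce to Lemma~\ref{Posa}: $N_G(S)=N_P(S)$, so every neighbor of $S$ outside $S$ is a $P$-neighbor of some vertex of $S$.

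\emph{Part (i).} Let $z\in V(W)\setminus\{x,y\}$. Then $z$ is an interior vertex of the segment $W$, so both of its $P$-neighbors lie in $W$. Since $V(W)\cap S=\emptyset$, neither $P$-neighbor is in $S$, and hence $z\notin N_P(S)$. By Lemma~\ref{Posa}, $N_P(S)=N_G(S)$, so $z\notin N_G(S)$.

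\emph{Part (ii).} Suppose, for contradiction, that $x\ne y$ and some $s\in S$ is adjacent to both $x$ and $y$. Take a longest path $Q$ ending at $s$, derived from $P$ by elementary exchanges fixing $v$, and orient it from $s$ to $v$. By Lemma~\ref{Segment}, $W$ appears in $Q$ as a subpath with ends $x$ and $y$. One of these ends comes first along $Q$ from $s$; call it $a$, and the other $b$. Since $W$ is a subpath of $Q$, the vertex after $a$ on $Q$ lies in $W$.

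\emph{Case 1: $sb\notin E(Q)$.} Exchange along the chord $sb$. This deletes the edge from $b$ to its predecessor, which is an interior vertex of $W$ when $W$ has at least three vertices, and is $a$ when $W=ab$. The new free endpoint is that predecessor, so it lies in $S$. This contradicts $V(W)\cap S=\emptyset$.

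\emph{Case 2: $sb\in E(Q)$.} Then $b$ is the second vertex of $Q$. But $a$ precedes $b$ and $a\ne s$ (as $a\notin S$), so $a$ would have to sit strictly between $s$ and $b$, which is impossible. Equivalently, $W$ would contain $b$ and the vertex $a$ placed before $b$, while $b$ is adjacent on $Q$ to $s\notin W$ on one side and to the interior of $W$ on the other. Ordering along $Q$ rules this out.

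The delicate point is this last case analysis: we need the exchange with $b$ (the farther end) to delete an edge of $W$, producing a free endpoint inside $W$. A cleaner way to organize it is to note that $b\ne v$ is not forced, but its predecessor on $Q$ always lies in $W$, because $a$ precedes $b$ and $W$ is contiguous in $Q$. So the exchange via $sb$ is always legal, and its free endpoint lies in $V(W)\cap S$, a contradiction, unless $sb\in E(Q)$. The latter would make $b$ adjacent to $s$, the first vertex of $Q$, forcing $b$ to precede $a$.
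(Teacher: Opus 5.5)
Your proof is correct and follows the paper's argument. Part (i) uses $N_G(S)=N_P(S)$. Part (ii) takes a path ending at the common neighbor, uses Lemma~\ref{Segment} to place $W$ contiguously on it, and rotates along the chord to the farther end so that its predecessor lands in $V(W)\cap S$; your Case 2, ruling out that this chord is already a path edge, just makes explicit a point the paper leaves implicit.
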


\begin{proof}
For (i) let $w$ be an interior vertex of $W$. Both neighbors of $w$ on $P$ lie in $V(W)$, hence outside $S$, so $w\notin N_P(S)$ and by Lemma~\ref{Posa}, $N_G(S)=N_P(S)$, so $w\notin N_G(S)$. 

As for (ii), suppose for a contradiction that there exists $z\in S$ such that $xz,yz\in E(G)$. Since $z\in S$, there exists some path $R=z,\dotsc,v$ that is derived from $P$ by elementary exchanges fixing $v$. By Lemma~\ref{Segment}, $W$ is a subpath of $R$; by symmetry we assume that $x$ preceeds $y$ on $R$ (otherwise, we swap the names $x$ and $y$). We can write $R=z,\dotsc,x,\dotsc,y, \dotsc,v$. But since $yz$ is an edge, $y^-$ the predecessor of $y$ in the order of $R$ must be in $S$ since a longest path $y^-, \dotsc,x,\dotsc, z, y, \dotsc, v$ can be derived from $R$ by an elementary exchange fixing $v$ (see Figure~\ref{fig:Segment_figure}). But then $y^-$ is a vertex in $S$ that lies in $W$, contradicting the hypothesis that $W$ is a segment. 
\end{proof}

\begin{figure}[htbp]
    \centering
    \resizebox{0.6\linewidth}{!}{\begin{tikzpicture}[
  line width=1.0pt,
  vertex/.style={circle,draw,fill=white,inner sep=0pt,minimum size=5.5pt},
  every label/.style={font=\large,text height=1.5ex,text depth=0.25ex},
  line cap=round
]
  \coordinate (z) at (0,0);
  \coordinate (a) at (2.3,0);
  \coordinate (b) at (5.0,0);
  \coordinate (c) at (6.4,0);
  \coordinate (v) at (10.1,0);

  \draw[yellow!65,line width=7pt] (a)--(c);

  \draw[black,solid,line width=1pt] (0,0)--(10.1,0);

  \draw (z) .. controls (0.50,1.0) and (1.80,1.0) .. (a);
  \draw (z) .. controls (0.65,1.85) and (5.40,1.55) .. (c);

  \foreach \xx in {0.85,1.10,1.35,3.25,3.55,3.85,7.80,8.15,8.50}
    \fill (\xx,0) circle (1.5pt);

  \node[vertex,label=below:{$z$}] at (z) {};
  \node[vertex,label=below:{$x$}] at (a) {};
  \node[vertex,label=below:{$y^{-}$}] at (b) {};
  \node[vertex,label=below:{$y$}] at (c) {};
  \node[vertex,label=below:{$v$}] at (v) {};

  \draw[decorate,decoration={brace,mirror,amplitude=7pt}]
    (2.3,-0.85)--(6.4,-0.85)
    node[midway,below=10pt,font=\large] {$W$};
\end{tikzpicture}}
    \captionsetup{width=.6\linewidth}
    \caption{An illustration of Lemma 9 with a longest path $R$ ending at $z$ with the edges $zx$ and $zy$ in $E(G)$.}
    \label{fig:Segment_figure}
\end{figure}

Informally, we aim to contract each segment of $P$ to a single vertex, construct a family of paths in the graph formed by contracting $H\cup P$, and then lift these paths to paths in $H\cup P$ that separate the edges of $H$. We now formalize this approach.

Let $P=u,\dots,v$ and define $S$ and $s$ as in Lemma~\ref{Posa}. Let $E_S$ be the edges of $G$ with at least one endpoint in $S$ and let $B\coloneq E_S\cap E(P)$. Let $H$ be the subgraph of $G$ that is edge-induced by $E_S\setminus B$. Note that $B$ and $E(H)$ partition $E_S$. Recall by Lemma~\ref{Posa} that $V(H)\subseteq S\cup N_G(S) \subseteq V(P)$ (see the top of Figure~\ref{fig:contraction}). Let $W_1, \dotsc, W_m$ be the segments of $P$ as in Definition~\ref{segment}. Since $W_1, \dotsc, W_m$ are the components of $P\setminus S$, each vertex in $N_G(S)$ is an end of some segment by Lemma~\ref{disjoint N} (i).

\begin{defn} \label{Definition of phi}
     Let $\varphi$ be the map on $V(P)$ that fixes every vertex of $S$ and sends every vertex of the segment $W_j$ to a single new vertex $w_j$, for each $j\in [m]$; see Figure~\ref{fig:contraction}. Since $V(H) \subseteq V(P)$, it follows that $\varphi$ is defined on all of $V(H)$. We write $P^*$ for the image of $P$ under $\varphi$, that is, 
\[
V(P^*)=\varphi(V(P)) \text{ and }E(P^*)= \{\varphi(a)\varphi(b): ab\in E(P),\text{ and } \varphi (a)\not= \varphi (b)\}.
\] 
We write $H^*$ for the image of $H$ under $\varphi$, with 
\[
V(H^*)=\varphi(V(H)) \text{ and } E(H^*)=\{\varphi(a)\varphi(b):ab\in E(H)\}.
\]
\end{defn}

\begin{lemma}[Contraction] \label{Contract}
The following 3 statements hold.
\begin{itemize}

\item[(i)] $P^*$ is a path on at most $2s$ vertices, i.e. $|V(P^*)| \le 2s$. 
\item [(ii)]$\varphi$ induces a bijection from $E_S$ to $E(H^*)\cup E(P^*)$ mapping $E(H)$ onto $E(H^*)$ and $B$ onto $E(P^*)$. 

\item[(iii)] $H^*$ is a simple graph with $V(H^*) \subseteq V(P^*)$ and $E(H^*)\cap E(P^*)=\emptyset$.

\end{itemize}

\end{lemma}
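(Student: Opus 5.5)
We treat the three parts in order. Throughout we use the structure of $P$ as an alternation of vertices of $S$ and segments, that is, the maximal subpaths $W_1,\dots,W_m$ of Definition~\ref{segment}.

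For (i), $\varphi$ is monotone along $P$ and collapses each segment to a single vertex $w_j$. The image $P^*$ is therefore the sequence obtained by listing, in order along $P$, the vertices of $S$ and the contracted vertices $w_j$, joining consecutive entries. Consecutive entries are distinct: two segments are never adjacent on $P$, since they are separated by a vertex of $S$. So $P^*$ is a path whose vertex set is $S$ together with $\{w_1,\dots,w_m\}$, and $|V(P^*)|=s+m$. It remains to show $m\le s$. Segments are components of $P\setminus S$, so between two consecutive segments there is at least one vertex of $S$, which gives $m\le s+1$. Moreover $u\in S$, because $u$ is the free endpoint of $P$ itself under zero exchanges. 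Hence the first vertex of $P$ lies in $S$, no segment precedes it, and the $m$ segments are each preceded by a distinct gap of $S$-vertices. This gives $m\le s$ and $|V(P^*)|\le 2s$.

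For (ii), first consider $B$. Each edge of $B$ is an edge of $P$ with an endpoint in $S$, so its two endpoints have distinct images: an $S$-vertex is fixed, and its other endpoint is either in $S$ or in a segment. Conversely, every edge of $P$ not in $B$ lies inside a segment and is collapsed. So $\varphi$ maps $B$ onto $E(P^*)$. The map is injective on $B$ because $P^*$ traverses the images in order and $P$ is a path: two distinct $P$-edges joining an $S$-vertex to the same segment would have to be the two edges on either side of that $S$-vertex, landing in the same segment. That is impossible unless the segment wraps around, which it cannot since it is a subpath and the $S$-vertex separates it.

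Next consider $E(H)$. Its edges have an endpoint in $S$ and are non-$P$ edges. By definition they map onto $E(H^*)$, and $\varphi(a)\ne\varphi(b)$ since one endpoint is a fixed $S$-vertex. Injectivity is the main point. Suppose $za$ and $z'b$ in $E(H)$ have the same image. If both endpoints are in $S$, then $z=z'$ and $a=b$. Otherwise $z=z'\in S$, and $a,b$ lie in the same segment $W=P[x,y]$. By Lemma~\ref{disjoint N}(i), $a$ and $b$ are ends of $W$. If $a\ne b$, then $\{a,b\}=\{x,y\}$ with $x\ne y$, and $z\in N_G(x)\cap N_G(y)\cap S$, contradicting Lemma~\ref{disjoint N}(ii).

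Finally, no edge of $H$ and edge of $B$ share an image; this is the step I expect to be the main obstacle, as it also gives (iii). Suppose $zb\in E(H)$ and $za\in B$ map to the same pair, with $z\in S$. The case where both endpoints are in $S$ forces $za=zb$, which cannot lie in both $H$ and $B$. Otherwise $a$ and $b$ are in the same segment $W$ and are ends of it by Lemma~\ref{disjoint N}(i). If $a=b$, then $za=zb$, again a contradiction. If $a\ne b$, then $W$ has distinct ends $x,y$ both adjacent to $z$ in $G$, contradicting Lemma~\ref{disjoint N}(ii). Thus $\varphi$ is a bijection from $E_S$ onto $E(H^*)\cup E(P^*)$ with the two images disjoint.

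For (iii), $H^*$ has no loops because each edge has an $S$ endpoint that is fixed. It has no multi-edges by the injectivity just shown. We have $V(H^*)\subseteq V(P^*)$ since $V(H)\subseteq V(P)$ by Lemma~\ref{Posa}, and $E(H^*)\cap E(P^*)=\emptyset$ is the disjointness established in the last step of (ii).
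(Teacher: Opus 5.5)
Your proposal is correct and follows essentially the same route as the paper. You get $m\le s$ from the $S$-vertex immediately preceding each segment, and you usefully make explicit that this relies on $u\in S$. Surjectivity comes from the definitions. Injectivity on $E_S$ reduces any collision to two distinct ends of one segment sharing an $S$-neighbour, which Lemma~\ref{disjoint N}(ii) forbids.

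The only difference is organisational. You split injectivity into $B$--$B$, $H$--$H$ and $H$--$B$ cases, and handle $B$--$B$ directly from the path structure. The paper instead does a single case analysis over all of $E_S$.
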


\begin{proof}

For (i), contracting vertex-disjoint subpaths of a path yields a path, so $P^*$ is a path. Furthermore, each segment has a unique vertex of $S$ immediately preceding it on $P$. Hence $m\le s$ and $|V(P^*)|=s+m\le 2s$.

For (ii), we first prove surjectivity. By Definition~\ref{Definition of phi}, we have $E(H^*)=\{\varphi(a)\varphi(b):ab\in E(H)\}$, so every edge of $H^*$ is the image of an edge of $E(H)$. 
Similarly, we have $E(P^*)=\{\varphi(a)\varphi(b): ab\in E(P) \text{ and } \varphi(a)\neq \varphi(b)\}$, which implies that every edge of $P^*$ is the image of an edge $ab\in E(P)$ with $\varphi(a)\neq\varphi(b)$ (see Figure~\ref{fig:contraction}). 
Thus, $\varphi$ is surjective.
As the segments are maximal subpaths of $P$ avoiding $S$, two vertices that are adjacent on $P$ lying outside $S$ belong to a common segment and hence have the same image under $\varphi$. Therefore, if $\varphi(a)\varphi(b)\in E(H)$, then at least one of $a$ and $b$ lies in $S$, so $ab\in B$. Thus, $\varphi$ maps $E(H)$ onto $E(H^*)$ and $B$ onto $E(P^*)$. 

Now we show that $\varphi$ is injective on $E_S$.
Recall that each edge of $E_S$ has either both endpoints in $S$, or one endpoint in $S$ and one endpoint that is the end of some segment of $P$. 
Fix distinct $e,e'\in E_S$ and suppose that $\varphi(e)=\varphi(e')$. If $e$ and $e'$ each have both endpoints in $S$, then $\varphi(e)=e$ and $\varphi(e')=e'$ since $\varphi$ fixes $S$ pointwise. But then $e=e'$, which contradicts our assumption. 
Suppose instead (by symmetry between $e$ and $e'$) that $e$ has both endpoints in $S$ and $e'$ has one endpoint in $S$ and one endpoint in some segment $W_j$ for some $j\in[m]$. Then $\varphi(e)\neq\varphi(e')$ since $\varphi(e')$ has an endpoint $w_j$ which is not in $S$. Lastly, suppose $e=zy$ and $e'=z'y'$ with $z,z'\in S$ and $y,y'$ ends of segments $W_j$ and $W_k$. Then $\varphi(e)=zw_j$ and $\varphi(e')=z'w_k$, where $z,z'\in S$ and $w_j,w_k\notin S$. If $\varphi(e)=\varphi(e')$, then $z=z'$ and $w_j=w_k$, so $j=k$ and $y,y'$ are both ends of $W_j$. If $y\neq y'$ they $y,y'$ are the two distinct ends of $W_j$, and $z\in N_G(y)\cap N_G(y')\cap S$, contradicting Lemma~\ref{disjoint N} (ii). Therefore, $y=y'$ and $e=e'$, contrary to assumption. Hence $\varphi$ is injective on $E_S$. Since $B$ and $E(H)$ partition $E_S$, their images $E(P^*)$ and $E(H^*)$ are disjoint, and $\varphi$ induces a bijection from $E_S$ to $E(H^*)\cup E(P^*)$ mapping $E(H)$ onto $E(H^*)$ and $B$ onto $E(P^*)$.

For (iii), since every edge of $E(H)$ has an endpoint in $S$ and $\varphi$ fixes $S$ pointwise, no edge of $E(H)$ has both endpoints mapped to the same vertex under $\varphi$. Therefore, $H^*$ has no loops. By (ii), $\varphi$ induces a bijection from $E(H)$ to $E(H^*)$, so $H^*$ has no multi edges and is therefore a simple graph. Moreover, $V(H^*)=\varphi(V(H))\subseteq\varphi(V(P))=V(P^*)$ and $E(H^*)\cap E(P^*)=\emptyset$ by (ii).
\end{proof}

\begin{figure}[htbp]
    \centering
    \resizebox{0.7\linewidth}{!}{\begin{tikzpicture}[
  line width=1.0pt, line cap=round, line join=round,
  vertex/.style={circle,draw,fill=white,inner sep=0pt,minimum size=6pt},
  Bedge/.style={draw=red,line width=1.8pt},
  Hedge/.style={ draw=blue!75!black, line width=1.0pt, dash pattern=on 4pt off 2pt},
  marked/.style={vertex,fill=black},
  group/.style={decorate,decoration={brace,mirror,amplitude=7pt}},
  every node/.style={font=\large}
]

  \foreach \i in {0,...,11}
    \coordinate (t\i) at ({1.2*\i},0);

  \draw (t1)--(t2) (t4)--(t6) (t9)--(t11);
  \draw[Bedge] (t0)--(t1) (t2)--(t4) (t6)--(t9);
  \draw[Hedge] (t0) .. controls (0.9,1.55) and (3.9,1.55) .. (t4);
  \draw[Hedge] (t3) .. controls (5.0,1.75) and (9.4,1.75) .. (t9);
  \draw[Hedge] (t6) .. controls (7.6,0.95) and (9.2,0.95) .. (t8);
  \foreach \i in {1,2,4,5,6,9,10,11}
    \node[vertex] at (t\i) {};
  \foreach \i in {0,3,7,8}
    \node[marked] at (t\i) {};
  \foreach \i/\lab in {0/u,3/z_1,7/z_2,8/z_3,11/v}
    \node[anchor=base] at ({1.2*\i},-0.48) {$\lab$};
  \draw[group] (1.0,-0.27)--(2.6,-0.27)
    node[midway,below=11pt] {$W_1$};
  \draw[group] (4.6,-0.27)--(7.4,-0.27)
    node[midway,below=11pt] {$W_2$};
  \draw[group] (10.6,-0.60)--(13.4,-0.60)
    node[midway,below=11pt] {$W_3$};

  \begin{scope}[yshift=-3.7cm]
    \coordinate (b0) at (0,0);     % u
    \coordinate (b1) at (1.8,0);   % center of W_1
    \coordinate (b2) at (3.6,0);   % z_1
    \coordinate (b3) at (6.0,0);   % center of W_2
    \coordinate (b4) at (8.4,0);   % z_2
    \coordinate (b5) at (9.6,0);   % z_3
    \coordinate (b6) at (12.0,0);  % center of W_3

    \draw[Bedge] (b0)--(b6);

    \draw[Hedge] (b0)
      .. controls (1.0,1.55) and (5.0,1.55) .. (b3);
    \draw[Hedge] (b2)
      .. controls (5.0,1.75) and (10.6,1.75) .. (b6);
    \draw[Hedge] (b3)
      .. controls (6.6,0.95) and (9.0,0.95) .. (b5);

    \foreach \i in {1,3,6}
      \node[vertex] at (b\i) {};
    \foreach \i in {0,2,4,5}
      \node[marked] at (b\i) {};

    \foreach \i/\lab in
      {0/u,1/w_1,2/z_1,3/w_2,4/z_2,5/z_3,6/w_3}
      \node[anchor=base,yshift=-0.48cm]
        at (b\i) {$\lab$};
  \end{scope}
\end{tikzpicture}}
    \caption{An example of $H\cup P$ (top) and its image $H^*\cup P^*$ under $\varphi$ (bottom). The solid black vertices belong to $S$. The dashed blue edges represent the edges of $H$ and $H^*$, respectively, while the thick red edges represent the edges of $B$ and $P^*$, respectively.}
    \label{fig:contraction}
\end{figure}

\begin{lemma}[Lifting]\label{Lifting} 
Every path $R^*$ in $H^*\cup P^*$ lifts to a path $R$ in $H\cup P$ with
  \[
  E(R)\cap E(H)=\varphi^{-1}\big(E(R^*)\cap E(H^*)\big)
  \qquad\text{and}\qquad E(R)\setminus E(H)\subseteq E(P).
  \]
\end{lemma}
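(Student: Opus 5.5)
We lift $R^*$ one vertex and one edge at a time, replacing each visit to a contracted vertex $w_j$ by a subpath of the segment $W_j$. Write $R^*=r_0,r_1,\dots,r_k$. By Lemma~\ref{Contract}(ii), each edge $r_{i-1}r_i$ of $R^*$ has a unique preimage $e_i\in E_S$. This preimage lies in $E(H)$ if $r_{i-1}r_i\in E(H^*)$ and in $B\subseteq E(P)$ if $r_{i-1}r_i\in E(P^*)$. Each $e_i$ has an endpoint in $S$, and its other endpoint is either in $S$ or an end of some segment.

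For a vertex $r_i\in S$ we keep $r_i$ itself; the preimages $e_i$ and $e_{i+1}$ both meet $r_i$, since $\varphi$ fixes $S$. For an internal vertex $r_i=w_j$, the edge $e_i$ enters $W_j$ at some end $a$, and $e_{i+1}$ leaves $W_j$ at some end $b$, each of $a,b\in\{x_j,y_j\}$. We insert the subpath $P[a,b]$ of $W_j$. This is the single vertex $a$ if $a=b$, and otherwise all of $W_j$. If $w_j$ is an end of $R^*$, we use the end of $W_j$ met by the incident preimage edge; if $k=0$, we take any single vertex of $W_j$.

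Let $R$ be the concatenation of the $e_i$ and the inserted segment pieces. By construction,
\[
E(R)\cap E(H)=\{e_i: r_{i-1}r_i\in E(H^*)\}=\varphi^{-1}\bigl(E(R^*)\cap E(H^*)\bigr),
\]
because the edges of segments and the edges of $B$ all lie in $E(P)$. Since $E(P)\cap E(H)=\emptyset$, those edges cannot enter $E(H)$, which also gives $E(R)\setminus E(H)\subseteq E(P)$.

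It remains to check that $R$ is a path, i.e.\ that no vertex repeats, and this is the main step. The vertices of $R^*$ are distinct, so the inserted pieces come from distinct segments, which are pairwise disjoint and disjoint from $S$. The vertices of $S$ used by $R$ are exactly the $S$-vertices of $R^*$, each used once. Inside a single piece $P[a,b]$ no vertex repeats, since the piece is a subpath of $P$.

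The one subtle point is consecutive edges: we must make sure $e_i$ and $e_{i+1}$ do not share their endpoint in the wrong way. The worry is both entering the same segment vertex while also pointing to the same $S$-vertex. That would require $r_{i-1}=r_{i+1}$, which is impossible in a path. Consequently every vertex of $R$ is listed exactly once, and consecutive listed vertices are joined by the chosen edges, so $R$ is a path in $H\cup P$ with the required edge sets.
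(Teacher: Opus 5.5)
Your proposal is correct and follows essentially the same route as the paper: you take the unique preimages in $E_S$ via Lemma~\ref{Contract}(ii), replace each $w_j$ by a single end or by all of $W_j$ according to where the incident preimage edges attach, and conclude that $R$ is a path because the inserted segments are pairwise disjoint and disjoint from $S$. Your handling of the one-vertex case $k=0$ is a small addition the paper omits. Your final paragraph on consecutive edges is redundant, since every neighbor of $w_j$ in $H^*\cup P^*$ lies in $S$, so vertex-distinctness already rules out any repetition.
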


\begin{proof}
Let $R^*$ be a path in $H^*\cup P^*$. By Lemma~\ref{Contract} (ii), every $f\in E(R^*)$ has a unique preimage $e\in E_S$, such that $e\in E(H)$ if $f\in E(H^*)$ and $e\in B$ if $f\in E(P^*)$. Suppose $f=zw_j$ for some $z\in S$ and $j\in[m]$. Since $\varphi$ fixes $z$, we have $e=zy$ for some $y\in V(W_j)\cap N_G(S)$, so by Lemma~\ref{disjoint N} (i), $y$ is an end of $W_j$. Thus, every edge of $R^*$ incident with some $w_j$ for $j \in [m]$ has a unique preimage in $E_S$ incident to an end of $W_j$.

We lift $R^*$ to $R$ as follows:

\begin{itemize}
  \item if $w_j$ is an end of $R^*$, replace it by the end of $W_j$ at which the preimage in $E_S$ of its unique incident edge of $R^*$ attaches;
  \item if $w_j$ is internal to $R^*$ and the preimages in $E_S$ of its two incident edges of $R^*$ attach at the same end $x$ of $W_j$, replace $w_j$ by $x$;
  \item if $w_j$ is internal to $R^*$ and the preimages in $E_S$ of its two incident edges of $R^*$ attach at opposite ends $x$ and $y$ of $W_j$, replace $w_j$ by $W_j$ traversed from $x$ to $y$.
\end{itemize}
 Every vertex of $R^*$ in $S$ is left unchanged. We claim that $R$ is a path in $H \cup P$. Indeed, consecutive vertices in $R$ are adjacent in $H\cup P$ since inserted segments are subpaths of $P$ and at every other position, the inserted vertices are the endpoints of the preimage of the corresponding edge in $R^*$, which by Lemma~\ref{Contract} (ii) lies in $E_S \subseteq E(H) \cup E(P)$. Furthermore, the vertices of $S$ in $R$ are exactly the vertices of $S$ in $R^*$ and $R^*$ is a path. The inserted segments are pairwise disjoint from each other and disjoint from $S$. Since each  $w_j$ appears at most once in $R^*$, and its replacement segment in $R$ contains no repeated vertices, no vertex of $R$ is repeated. Thus, $R$ is a path. 

 Finally, every edge of $R$ is either an edge of some $W_j$, hence an edge of $P$, or is the unique preimage of an edge of $R^*$, which by Lemma~\ref{Contract} (ii) lies in $E(H)$ if that edge lies in $E(H^*)$ or in $B$ otherwise. As $E(H)$ and $E(P)$ are disjoint, it follows that $E(R)\cap E(H)=\varphi^{-1}\big(E(R^*)\cap E(H^*)\big)$ and $E(R)\setminus E(H)\subseteq E(P)$.
\end{proof}

\section{Proof of Theorem~\ref{7.75n}} \label{Proof of Main Result}

\begin{proof} [Proof of Theorem~\ref{7.75n}]

 Let $G$ be a graph on $n$ vertices, let $G_0 \coloneq G$, and let $n_0 \coloneq n$. Starting at $i=0$, we iterate the following steps until no edges remain. Begin with a graph $G_i$ on $n_i$ vertices and let $P_i=u,\dotsc,v$ be a longest path of $G_i$. Let $S_i$ be the set of P\'osa rotation endpoints fixing $v$ as defined in Lemma~\ref{Posa}. Let $E_{S_i}$ be the set of edges of $G_i$ with at least one endpoint in $S_i$ and let $B_i \coloneq E_{S_i}\cap E(P_i)$. Let $H_i$ be the subgraph of $G_i$ edge-induced by $E_{S_i}\setminus B_i$; hence, $V(H_i) \subseteq S_i \cup N_{G_i}(S_i)$. Note that $E(H_i)$ and $B_i$ partition $E_{S_i}$. Let $G_{i+1}\coloneq G_i\setminus S_i$. 
 
 Let $s_i \coloneq |S_i|$ and note that the elements of $\{S_i\}_{i\ge 0}$ are pairwise disjoint; hence $\sum_{i\ge 0}s_i \leq n$. Let $h_i \coloneq |V(H_i)|$ and by Lemma~\ref{Posa}, we have $h_i\leq s_i+|N_{G_i}(S_i)|\leq 3s_i-1$. Let $B$ be the graph edge-induced by $\bigcup_{i\ge 0} B_i$ and note that $\{E(H_i)\}_{i\geq 0}$ and $E(B)$ partition the edges of $G$. Let $H \coloneq \bigcup_{i\ge 0} H_i$. We refer to the $i$-th iteration as the steps outlined above carried out on $G_i$. Having defined the iterations, it suffices to construct $\mathcal{B}$, $\{\mathcal{C}_i\}_{i\ge 0}$, and $\{\mathcal{P}_i\}_{i\ge 0}$ as in (i), (ii), and (iii) below.

 \begin{enumerate}
\item [(i)] A family $\mathcal{B}$ of paths contained in $B$ that separate the edges of $B$ with $|\mathcal{B}|\le n$; and
\item [(ii)] for each $i$, a family of paths $\mathcal{C}_i$ contained in $H_i$ that cover $E(H_i)$ with $|\{\mathcal{C}_i\}|\le 7n/4$; and
\item [(iii)] for each $i$, a family of paths $\mathcal{P}_i$ that separate the edges of $H_i$
with $|\{\mathcal{P}_i\}|\le 5n$.
\end{enumerate}
Now we are done since
$$|\mathcal{B}| + \sum_i|\mathcal{C}_i|+\sum_i|\mathcal{P}_i| \le n+7n/4+5n = 7.75n.$$ 

Let $\mathcal{C}\coloneq \bigcup_{i\ge 0} \mathcal{C}_i$ and $\mathcal{P}\coloneq \bigcup_{i\ge 0} \mathcal{P}_i$. We claim that $\mathcal{B} \cup \mathcal{C} \cup \mathcal{P}$ is a separating system of $E(G)$. Indeed, let $e,f$ be distinct edges of $G$ and recall that $E(B)$ and $\{E(H_i)\}_{i\ge 0}$ partition $E(G)$. If $e,f\in E(H_i)$ for some $i$, then $e$ and $f$ are separated in both directions by $\mathcal{P}_i$. If $e,f\in E(B)$, then $e$ and $f$ are separated in both directions by $\mathcal{B}$. Otherwise $e$ and $f$ lie in distinct parts, and a path containing $e$ and avoiding $f$ suffices, with the reverse following by symmetry. If $e\in E(H_i)$, then some path of $\mathcal{C}_i$ contains $e$ and is contained entirely in $H_i$, so it avoids $f\notin E(H_i)$. If $e\in E(B)$, then some path of $\mathcal{B}$ contains $e$ and is contained in $B$, so it avoids $f\notin E(B)$.

\begin{figure}[htbp]
    \centering
    \resizebox{0.6\linewidth}{!}{\begin{tikzpicture}[
  line width=1pt, line cap=round, line join=round,
  part/.style={circle,draw=black,fill=gray!15,minimum size=1.9cm,
               inner sep=0pt,font=\Large},
  arrow/.style={-{Stealth[length=2.6mm,width=1.9mm]},shorten >=2pt,shorten <=2pt},
  every node/.style={font=\large}
]
 \node[part,fill=blue!10,draw=blue!75!black]
    (H) at (0,0) {$H$};

\node[part,fill=red!10,draw=red]
    (B) at (5.4,0) {$B$};

  \draw[arrow] (H.45)
    .. controls (1.7,1.65) and (3.7,1.65) .. (B.135)
    node[midway,above=5pt] {$\mathcal{C}$};

  \draw[arrow] (B.225)
    .. controls (3.7,-1.65) and (1.7,-1.65) .. (H.315)
    node[midway,below=5pt] {$\mathcal{B}$};

  \draw[arrow] (H.205)
    .. controls (-2.35,-1.05) and (-2.35,1.05) .. (H.155)
    node[midway,left=6pt] {$\mathcal{P}\cup\mathcal{C}$};
  \draw[arrow] (B.25)
    .. controls (7.75,1.05) and (7.75,-1.05) .. (B.335)
    node[midway,right=6pt] {$\mathcal{B}$};
\end{tikzpicture}}
    \captionsetup{width=.6\linewidth}
    \caption{An illustration of the subgraphs $H$ and $B$ with the path families that separate their edges where the arrow $X \xrightarrow[]{\mathcal{F}}Y$ implies that $\mathcal{F}$ separates the edges of $X$ from the edges of $Y$.} 
    \label{fig:separation_figure}
\end{figure}

\medskip
\noindent\underline{Separating the Edges of $B$.} We claim that $B$ is $2$-degenerate. Intuitively, if we order the vertices of $B$ by the iteration in which they are deleted, every edge of $B$ joining a vertex of $S_i$ to a vertex later in order lies on $P_i$. Since $P_i$ is a path, each vertex of $S_i$ has at most $2$ later neighbors in this order. We formalize this intuition using the following argument given in \cite{liu2026strongly}. 

Order the vertices of $B$ so that the vertices of $S_i$ come before those of $S_j$ for $i<j$ and the vertices of each $S_i$ are ordered arbitrarily. Append, in arbitrary order, the vertices contained in no $S_i$, that is, the vertices remaining after the last iteration and no edges remain. We show that each vertex has at most two neighbors later in this order.

Suppose $B$ has some edge $ab$ such that $a\in S_k$ precedes $b$ in the order of $V(B)$. Since $ab\in E(B)$, we have $ab\in B_\ell$ for some iteration $\ell$. If $\ell<k$, then at least one endpoint of $ab$ must be in $S_\ell$ by definition. That endpoint cannot be $a$ since $S_k$ and $S_\ell$ are disjoint, so $b\in S_\ell$. However, $b\in S_\ell$ contradicts that $a$ comes before $b$ in order since by assumption $\ell<k$. If $\ell>k$, then $a\notin V(G_\ell)$ since $a$ was deleted at iteration $k$, hence $ab\notin E_{S_\ell}$ which contradicts that $ab\in B_\ell=E_{S_\ell}\cap E(P_\ell)$. Therefore, $\ell=k$. Thus, every edge joining $a$ to a later vertex in the order belongs to $B_k\subseteq E(P_k)$. Since $P_k$ is a path, its maximum degree is $2$ which implies that $a$ is incident to at most two edges of $B_k$. Thus, $a$ has at most two neighbors that are later in the order of $V(B)$. 

Now, let $b$ be one of the vertices appended to the order of $V(B)$, i.e. $b$ is not contained in any $S_i$. Suppose $ab \in E(B)$, such that $ab \in B_\ell$. Since $B_\ell=E_{S_\ell}\cap E(P_\ell)$ at least one endpoint of $ab$ lies in $S_\ell$. Since $b$ does not lie in any $S_i$, it follows that $b \notin S_\ell$, so $a\in S_\ell$. Hence $a$ is deleted at iteration $\ell$ and therefore precedes $b$ in the order of $V(B)$. Thus, $b$ has no later neighbor in the order of $V(B)$ and $B$ is $2$-degenerate.

Since $|V(B)| \leq n$, Theorem~\ref{2-degenerate} gives a separating path system $\mathcal{B}$ of $B$, of size at most $n$, which we choose also to cover the edges of $B$.

\medskip

\noindent\underline{Covering the Edges of $H$.}

 Fix an iteration $i$. If $H_i$ is empty, let $\mathcal{C}_i\coloneq\emptyset$. Otherwise, $H_i$ has at least two 
 non-isolated vertices. Let $C_1,\dotsc, C_c$ be the components of $H_i$. Since $H_i$ is edge-induced, no component consists of an isolated vertex. Let $r_k\coloneq|V(C_k)|$ for each $k\in [c]$ and note that $\sum_{k=1}^{c}r_k= h_i$. By Theorem~\ref{Fan}, there exists a path cover of size at most $\lceil {r_k}/{2}\rceil$ of $C_k$. Let $u_k \coloneq |V(C_k)\cap S_i|$ and note that $\sum_{k=1}^{c}u_k=|S_i\cap V(H_i)|\leq s_i$. If some $u_k=1$, then denote $V(C_k)\cap S_i$ by $ \{x\}$; now every edge of $C_k$ is incident to $x$ since $C_k$ is a component of $H_i$. Therefore, $C_k$ is a copy of $K_{1,{r_k-1}}$ and can be covered by at most $\lceil{(r_k-1)}/{2}\rceil\leq {r_k}/{2}$ paths (by pairing up the leaves). Otherwise, $u_k\geq 2$, and each such component can be covered by at most $\lceil{r_k}/{2}\rceil\leq {r_k+1}/{2}$ paths. The number of components with $u_k\geq 2$ is at most ${s_i}/{2}$. Let $\mathcal{X}$ denote the family of components of $H_i$ for which $u_k=1$ and let $\mathcal{Y}$ denote the family for which $u_k \geq 2$. Summing over all components gives a family of paths $\mathcal{C}_i$ covering $E(H_i)$ of size at most 
\[
 \sum_{C_k\in\mathcal{X}} \bigg\lceil\frac{r_k-1}{2}\bigg\rceil +\sum_{C_k\in\mathcal{Y}} \bigg\lceil\frac{r_k}{2}\bigg\rceil \leq \sum_{C_k\in\mathcal{X}} \frac{r_k}{2} +\sum_{C_k\in\mathcal{Y}}\bigg(\frac{r_k}{2}+\frac{1}{2}\bigg)\leq \frac{h_i}{2}+\frac{s_i}{4}
\]
We repeat this construction for all iterations $i$ and let $\mathcal{C}\coloneq \bigcup_i \mathcal{C}_i$. It follows that 
\[|\mathcal{C}| \leq \sum_{i\ge 0} \bigg(\frac{h_i}{2}+\frac{s_i}{4}\bigg) \leq \frac12\sum_{i\ge 0}h_i+\frac14\sum_{i\ge 0} {s_i}\le\bigg(\frac32+\frac14\bigg)\sum_{i\ge 0}{s_i}\le \frac{7n}{4}.\]

\medskip

\noindent\underline{Separating the Edges of $H$.} It remains to separate the edges of $H$. Since $\{E(H_i)\}_{i\ge 0}$ partitions $E(H)$, and $\mathcal{C}_i$ is a path cover of $H_i$ for each $i$, it suffices to separate the edges within each $H_i$. For each iteration $i$, we construct a family of paths $\mathcal{P}_i$ that separate the edges of $H_i$. 

Fix some iteration $i$. If $H_i$ is empty, let $\mathcal{P}_i\coloneq \emptyset$. Otherwise, $H_i$ has at least two non isolated vertices. Define $\varphi$ as in Definition~\ref{Definition of phi}. By Lemma~\ref{Contract}(i), $P^*_i$ is a path on at most $2s_i$ vertices, and by Lemma~\ref{Contract}(iii), $H^*_i$ is a simple graph with $V(H^*_i)\subseteq V(P^*_i)$ and $E(H^*_i)\cap E(P^*_i)=\emptyset$. Theorem~\ref{Separate H} then yields a family $\mathcal{Q}^*_i$ of paths in $H^*_i\cup P^*_i$ that separate the edges of $H^*_i$ and $|\mathcal{Q}^*_i|$ is at most
\[
\frac{5|V(H^*_i)|}{2} \leq 5s_i.
\] 
By Lemma~\ref{Lifting}, each path $R^*\in \mathcal{Q}^*_i$ lifts to a simple path $R$ in $H_i\cup P_i$ satisfying $E(R)\cap E(H_i)=\varphi^{-1}(E(R^*)\cap E(H^*_i))$. Let $\mathcal{P}_i$ be the family of lifted paths so that $|\mathcal{P}_i|=|\mathcal{Q}^*_i|\leq 5s_i$. We claim that $\mathcal{P}_i$ separates $E(H_i)$. Indeed, choose $e,e'\in E(H_i)$ such that $e\not=e'$. By Lemma~\ref{Contract} (ii), $\varphi(e)$ and $\varphi(e')$ are distinct edges of $H^*_i$. Therefore, there exists some $R^*\in \mathcal{Q}^*_i$ that contains $\varphi(e)$ and avoids $\varphi(e')$. Let $R\in \mathcal{P}_i$ be the lift of $R^*$. By Lemma~\ref{Lifting}, $E(R)\cap E(H_i)=\varphi^{-1}(E(R^*)\cap E(H^*_i))$ hence $e\in R$ and $e' \notin R$. Symmetrically, there must also exist a path in $\mathcal{P}_i$ that contains $e'$, but avoids $e$. Therefore, $\mathcal{P}_i$ separates the edges of $H_i$. Let $\mathcal{P}\coloneq\bigcup_i\mathcal{P}_i$, so \[
|\mathcal{P}|\leq \sum_i 5s_i \leq 5n.
\]

\aftermath
\end{proof}

\subsection*{Statement on AI Use}

While preparing this manuscript, we used ChatGPT-6 Astra and Claude Opus 5.5 to assist with the exposition, refinement, and verification of mathematical arguments. ChatGPT-6 Astra helped draw the figures, and ChatGPT-6 Astra and Claude Opus 5.5 suggested edits, which we implement selectively. Finally, we originally obtained a longer proof of Corollary~\ref{d-degenerate} by extending the framework used in \cite{fernandes2026separating} for $2$-degenerate graphs. After prompting ChatGPT-6 Astra for feedback on that proof, the model suggested a shorter proof which we present in Section~\ref{Preliminatires}. The authors assume full responsibility for the content of this manuscript.

	\bibliographystyle{habbrv} 
	\bibliography{7.75n}

\end{document}